\newcommand{\CC}{\mathbb{C}}
\newcommand{\GG}{\mathbb{G}}
\newcommand{\NN}{\mathbb{N}}
\newcommand{\PP}{\mathbb{P}}
\newcommand{\QQ}{\mathbb{Q}}
\newcommand{\ZZ}{\mathbb{Z}}
\newcommand{\Oc}{\mathcal{O}}
\newcommand{\set}[1]{\left\{ #1 \right\}}
\newcommand{\setb}[1]{\left( #1 \right)}
\newcommand{\abs}[1]{\left| #1 \right|}
\DeclareMathOperator{\lcm}{lcm}
\newtheorem{mymasterthm}{notForUse}%[section]
\theoremstyle{definition}
\theoremstyle{plain}
\newtheorem{mythm}[mymasterthm]{Theorem}
\newtheorem{myprop}[mymasterthm]{Proposition}
\title{Yet another $ S $-unit variant of diophantine tuples}
\subjclass[2000]{11D61}
\keywords{Diophantine tuples, $ S $-units, gcd in number fields}
\author[C. Fuchs]{Clemens Fuchs}
\author[S. Heintze]{Sebastian Heintze}
\address{University of Salzburg\newline
	\indent Department of Mathematics\newline
	\indent Hellbrunnerstr. 34 \newline
	\indent A-5020 Salzburg, Austria}
\email{clemens.fuchs@sbg.ac.at, sebastian.heintze@sbg.ac.at}
\begin{document}
	
	\maketitle
	
%	{
%	\noindent
%	\textcolor{red}{working version \hfill \today}
%	}
	
	\begin{abstract}
		We show that there are only finitely many triples of integers $ 0 < a < b < c $ such that the product of any two of them is the value of a given polynomial with integer coefficients evaluated at an $ S $-unit that is also a positive integer. The proof is based on a result of Corvaja and Zannier and thus is ultimately a consequence of the Schmidt subspace theorem.
	\end{abstract}
	
	\section{Introduction}
	
	An old problem studied by Diophantus is to find positive integers such that the product of any two increased by one is a perfect square. There has been a lot of work done on this and related problems in the last decade. We do not enter that history, but refer to \cite{dujella-web} instead where these results are collected and described. Our intention is to add another point of view that was touched in some recent papers but left out so far.
	
	Let $ S = \set{p_1,\ldots,p_h} $ be a given set of rational primes that we will fix from now on. The ring of $ S $-integers will be denoted by $ \Oc_S = \{ p/q \in \QQ : p \in \ZZ, q = p_1^{k_1} \cdots p_h^{k_h}, k_1,\ldots,k_h \in \NN \} $ and its unit group, the group of $ S $-units, by $ \Oc_S^* $. We call an $ n $-tuple $ (a_1,\ldots,a_n) \in \ZZ^n $ with $ 0 < a_1 \leq \cdots \leq a_n $ an $ S $-Diophantine $ n $-tuple, if for all $ 1 \leq i < j \leq n $ we have $ a_i a_j + 1 = s_{i,j} $ is an $ S $-unit. Observe that if $ 2 \in S $ then we can find infinitely many $ S $-Diophantine $ n $-tuples of the form $ (1,\ldots,1,a) $. We will call such $ n $-tuples trivial.
	
	We first mention that Corvaja and Zannier proved in \cite{corvaja-zannier-2003} that there are at most finitely many $ S $-Diophantine triples $ (a,b,c) $ with $ a<b<c $. Actually, they proved that there are only finitely many triples of positive integers $ a<b<c $ such that the product $ (ab+1)(ac+1) $ has all of its prime factors in $ S $. (This led to a proof of a conjecture by Gy\H{o}ry, Sark\"ozy and Stewart on the largest prime divisor of $ (ab+1)(ac+1) $.)
	
	Let $ S = \set{p} $ consist just of one prime $ p $. It is easy to show that then no $ S $-Diophantine triple exists at all. To see this let, more generally, $ q $ be a positive integer and $ (a,b,c) \in \ZZ^3 $ with $ 0 < a \leq b \leq c $ such that $ ab+1=q^k, ac+1=q^m, bc+1=q^n $ for $ k,m,n \in \NN $. We have $ k \leq m \leq n $ and $ b-a = bq^m - aq^n $. This implies that $ (b-a)/(ab+1) \in \ZZ $ which is impossible unless $ a=b $. However, the only perfect powers that differ just by $ 1 $ are given by the example $ 3^2-2^3=1 $ (this is the famous Catalan conjecture that was solved be Mih\u ailescu in \cite{mihailescu-2004}), and this does not lead to a valid solution for $ q^k - a^2 = 1 $ either. In conclusion we assume from now on that $ \abs{S} \geq 2 $.
	
	We mention that this definition is motivated by \cite{fuchs-luca-szalay-2008} where the same kind of question was asked for a given linear combination of two perfect powers with given base instead of requiring that the product of any two plus one takes products of primes in a given set only (in fact there it was proved that there are only finitely many triples of distinct positive integers which take values in a given binary linear recurring sequence unless the recurrence is of certain exceptional shape).
	
	In this paper we give an extension of the result of Corvaja and Zannier. For this, let $ f \in \ZZ[X] $ be a non-constant polynomial. We are interested in $ (a_1, \ldots, a_n) \in \ZZ^n $ with $ 0 < a_1 \leq \cdots \leq a_n $ such that $ a_i a_j = f(s_{i,j}) $ for some $ s_{i,j} \in \Oc_S^* $ for all $ 1 \leq i < j \leq n $. Observe that $ f(X) = X-1 $ is the case discussed above.
	
	The theorem below is a generalisation of the result of Corvaja and Zannier \cite{corvaja-zannier-2003} and uses another result due to these authors. The theorem of course also implies that the size $ n $ such that there exists an $ S $-Diophantine $ n $-tuple $ (a_1,\ldots,a_n) $ is bounded. Clearly this bound depends on $ \abs{S} $ and cannot expected to be small; for example we have that $ (99,315,9920,32768,44460,19534284) $ has the property that the product of any two plus $ 2985984 $ is an $ S $-unit for $ S = \{ 2, 3, 5, 13, 19, 83, 103, 151, 163, 193, $ $ 199, 229, 283, 439, 463, 1019, 1453, 8629 \} $ (i.e. we have $ f(X) = X-2985984 $ with the notation below). Observe that another related result can be found in \cite{gyarmati-2005}.
	
	In the theorem below we consider only triples. If the reader is interested in issues about quadruples we refer to some results published by Luca, Szalay and Ziegler.
	In \cite{szalay-ziegler-2013-2} Szalay and Ziegler show that for $ S = \set{p,q} $ with $ p,q \equiv 3 \ \mathrm{mod}\ 4 $ no $ S $-Diophantine quadruple exists.
	In \cite{szalay-ziegler-2015} the same authors prove that if $ S = \set{2,q} $ with $ q \equiv 3 \ \mathrm{mod}\ 4 $ or $ q $ small enough also no $ S $-Diophantine quadruple exists.
	Further they show in \cite{szalay-ziegler-2013-1} the same result for any set $ S $ of cardinality two satisfying some technical conditions.
	Luca and Ziegler prove in \cite{luca-ziegler-2014} an upper bound for the number of $ S $-Diophantine quadruples depending on the cardinality of $ S $. This bound depends on an upper bound for the number of non-degenerate solutions to an $ S $-unit equation.
	
	There are obviously infinitely many pairs $ (a,b) $ with $ 0 < a < b $ satisfying $ ab = f(u) $ for $ u \in \Oc_S^* \cap \NN $ and a given $ f \in \ZZ[X] $, with positive leading coefficient, namely of the form $ (a,b) = (1,f(u)) $.
	So the natural next question is what we can say about triples.
	For the sake of an example $ (1,5,11) $ has the property that the product of any two is the value of $ f(X) = X^2-X-1 $ evaluated at an $ u \in \Oc_S^* \cap \NN $ for $ S = \set{2,3} $.
	
	Conversely, let us fix a triple $ (a,b,c) $, a set $ S $ of primes and natural numbers $ u,v,w \in \Oc_S^* $. Then one can construct a polynomial $ g \in \QQ[X] $ such that $ ab = g(u), ac = g(v), bc = g(w) $ via the Lagrange interpolation polynomial. Let now $ d $ be a common multiple of the denominators of the coefficients of $ g $. For $ f = d^2 g $ we get $ adbd = f(u), adcd = f(v), bdcd = f(w) $ for $ (ad,bd,cd) \in \ZZ^3 $ and $ f \in \ZZ[X] $. The last statement holds also true if we replace $ f $ by $ f+h $ for a polynomial $ h \in \ZZ[X] $ with $ h(u) = h(v) = h(w) = 0 $. Therefore for any $ S $ and any bound $ D $ we can find a polynomial $ f $ over the integers of degree at least $ D $ such that there exists at least one triple $ (a,b,c) \in \ZZ^3 $ satisfying $ 0 < a < b < c $ as well as $ ab = f(u), ac = f(v), bc = f(w) $ with $ u,v,w \in \Oc_S^* \cap \NN $.
	
	\section{Results and notations}
	
	The main theorem that we are going to prove in the next section is the following statement:
	
	\begin{mythm}
		\label{p1-thm:mainthm}
		Let $ S $ be a finite set of primes and $ f \in \ZZ[X] $ be a non-constant polynomial with $ f(0) \neq 0 $ and with at least one zero of odd multiplicity.
		Then there are at most finitely many triples $ (a,b,c) \in \ZZ^3 $ with $ 0 < a < b < c $ such that $ ab = f(u), ac = f(v), bc = f(w) $ with $ u,v,w \in \Oc_S^* \cap \NN $.
	\end{mythm}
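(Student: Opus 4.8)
The plan is to convert the three multiplicative relations into a divisibility forcing a large greatest common divisor, and then to contradict the Corvaja--Zannier gcd estimate. I may assume the leading coefficient of $ f $ is positive, since otherwise $ f(u)=ab>0 $, $ f(v)=ac>0 $, $ f(w)=bc>0 $ hold only for $ u,v,w $ in a bounded range and finiteness is immediate; write $ d=\deg f $. Multiplying and dividing the three equations gives
\[
a^2=\frac{f(u)f(v)}{f(w)},\qquad b^2=\frac{f(u)f(w)}{f(v)},\qquad c^2=\frac{f(v)f(w)}{f(u)},
\]
so in particular $ (abc)^2=f(u)f(v)f(w) $. From $ 0<a<b<c $ we read off $ f(u)<f(v)<f(w) $ and $ c^2>bc=f(w) $, i.e. $ c>\sqrt{f(w)} $. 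The key point is that $ ac=f(v) $ and $ bc=f(w) $ yield $ c\mid f(v) $ and $ c\mid f(w) $, whence
\[
\gcd(f(v),f(w))\ge c>\sqrt{f(w)}.
\]

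Now I would invoke the theorem of Corvaja and Zannier. For $ S $-unit positive integers one has $ \log f(w)=d\log w+O(1) $, so the last inequality gives $ \log\gcd(f(v),f(w))>\tfrac{d}{2}\log w\ge\tfrac{d}{2}\max(\log v,\log w)+O(1) $, the last step because $ f(v)<f(w) $ forces $ v<w $ for large arguments. By contrast, their gcd theorem asserts that for each $ \varepsilon>0 $ all but finitely many pairs $ (v,w)\in(\Oc_S^*\cap\NN)^2 $---those outside a finite list of exceptional families of multiplicatively dependent pairs---satisfy $ \log\gcd(f(v),f(w))\le\varepsilon\max(\log v,\log w) $. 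Taking $ \varepsilon<d/2 $, the two bounds are incompatible, so every solution produces a pair $ (v,w) $ lying in the finite exceptional set or in one of those families. Since a bound on $ (v,w) $ bounds $ f(v)=ac $ and $ f(w)=bc $, hence $ a,b,c $, and then $ f(u)=ab $ bounds $ u $, it suffices to control $ (v,w) $: the theorem follows once the exceptional families are dealt with.

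The main obstacle is exactly this last point. To kill a family of multiplicatively dependent pairs, say $ w=v^{k} $, I would play the divisibility $ c\mid f(v) $ against the size bound $ c>\sqrt{f(w)} $: along such a relation $ f(v) $ is far smaller than $ \sqrt{f(w)} $, so the two cannot coexist and the family is empty. The hypotheses are spent here. The condition $ f(0)\neq0 $ prevents the degenerate inflation of $ \gcd(f(v),f(w)) $ by the common $ S $-unit factors of $ v $ and $ w $ (if $ X\mid f $ then $ v\mid f(v) $, and two $ S $-units can share arbitrarily large common factors supported on $ S $); this is the analogue of the trivial tuples $ (1,\dots,1,a) $. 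The odd--multiplicity hypothesis---equivalent, on writing $ f=g\,h^{2} $ with $ g\in\ZZ[X] $ squarefree, to $ \deg g\ge1 $---ensures that $ (abc)^{2}=f(u)f(v)f(w) $ is the genuine square--class constraint that $ g(u)g(v)g(w) $ be a perfect square, rather than an identity, which is the non-degeneracy needed to apply the Corvaja--Zannier result and to rule out a family along which the product is identically a square. Packaging the three divisibilities $ a\mid\gcd(f(u),f(v)) $, $ b\mid\gcd(f(u),f(w)) $, $ c\mid\gcd(f(v),f(w)) $ together with this square-class constraint into a single clean application that disposes of all exceptional families at once---as in the original treatment of $ (ab+1)(ac+1) $---is the step I expect to require the most care.
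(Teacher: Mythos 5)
Your first step coincides with the paper's: from $c \mid f(v)$ and $c \mid f(w)$ you get $\gcd(f(v),f(w)) \geq c > \sqrt{f(w)}$, which contradicts the Corvaja--Zannier gcd bound for all pairs $(v,w)$ outside finitely many translates of one-dimensional subgroups of $\GG_m^2$. That part is correct. The genuine gap is your treatment of those exceptional families. A translate of a one-dimensional subgroup is a relation $v^k w^{\ell} = g$ with $\gcd(k,\ell)=1$, and your proposed dismissal --- ``along such a relation $f(v)$ is far smaller than $\sqrt{f(w)}$'' --- only has a chance when $w$ grows like $v^{\kappa}$ with $\kappa > 2$. It fails outright for the family $w = \phi v$ with $\phi \in \QQ^*$ fixed (the case $k=-\ell=1$), where $f(v) \asymp f(w)$ and the divisibility $c \mid f(v)$ is perfectly compatible with $c > \sqrt{f(w)}$ (take $a$ bounded). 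This proportional family is precisely the hardest case, and it cannot be excluded by size comparisons alone.

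The paper's proof spends most of its length exactly where your proposal stops. After landing in a fixed subgroup $v^k w^{\ell} = g$, it applies the gcd proposition a \emph{second} time to the pair $(u,w)$, obtaining $b \leq w^{\varepsilon_2}$ outside further exceptional families; the subcase $k+\ell = 0$ (i.e.\ $w = \phi v$) then requires a rational-approximation argument showing $|b/a - \phi^n| \geq 1/(aq^n)$ while also being $O(1/v)$, together with ruling out the polynomial identity $f(\phi X) = \phi^n f(X)$. When $(u,w)$ is also trapped in a subgroup, all of $u,v,w$ become $\eta_j X^{d_j}$ for a single $S$-unit parameter $X$, and $(abc)^2 = f(u)f(v)f(w)$ yields infinitely many quasi-integral points on $Y^2 = f(\eta_1 X^{d_1})f(\eta_2 X^{d_2})f(\eta_3 X^{d_3})$; LeVeque's theorem then forces this polynomial to have at most two odd-multiplicity roots, and a delicate four-case analysis of arguments and absolute values of the roots $(\alpha_i/\eta_j)^{1/d_j}$ shows it actually has at least three. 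Your closing remarks about the square-class constraint and the odd-multiplicity hypothesis gesture in the right direction, but you supply no argument there, and you yourself flag this as the step requiring the most care. As it stands, the proposal proves only the non-exceptional case and leaves the core of the theorem unproved.
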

	
	Let us first give some remarks on the assumptions of this theorem.
	We cannot remove the condition $ f(0) \neq 0 $. As a counterexample in this case consider the polynomial $ f(X) = X $ and an arbitrary finite (non-empty) set $ S $ of primes. Then it holts that $ f(0) = 0 $ but all the other assumptions of the theorem are satisfied. In that situation there exist obviously infinitely many triples $ (a,b,c) \in \ZZ^3 $ fulfilling $ a,b,c \in \Oc_S^* $ and $ 0<a<b<c $. Each such triple satisfies $ ab = u \in \Oc_S^* \cap \NN, ac = v \in \Oc_S^* \cap \NN, bc = w \in \Oc_S^* \cap \NN $.
	
	Furthermore the restriction $ u,v,w \in \Oc_S^* $ is necessary. If we only require them to be natural numbers, there is the following counterexample: Consider $ f(X) = (X+1)^3 $. Thus all assumptions are satisfied. For each triple $ (a_0,b_0,c_0) \in \ZZ^3 $ with $ 0 < a_0 < b_0 < c_0 $ the conclusion of the theorem holds with $ (a,b,c) = (a_0^3,b_0^3,c_0^3) $ and $ u = a_0b_0-1, v = a_0c_0-1, w = b_0c_0-1 $.
	
	It is easy to see that all the assumptions are satisfied in the case $ f(X) = X-1 $. Therefore the theorem is applicable in that situation.
	
	There are different possibilities how to deal with the equations $ ab = f(u), ac = f(v), bc = f(w) $. First one can consider common divisors of $ f(v), f(w) $ or $ f(u), f(w) $. A second usage would be to multiply the three equations. We will use both of these options in the proof of Theorem \ref{p1-thm:mainthm} below.
	A third possibility could be to eliminate one variable. For instance we can deduce from $ ac = f(v), bc = f(w) $ the equations $ f(v)/a = c = f(w)/b $ and thus $ f(v) = (a/b) \cdot f(w) $. Then Proposition 3.1 in \cite{hajdu-sarkozy-2018-2} may help but it does not immediately imply finiteness. In combination with this one could try to use Lemma 2.2 in \cite{hajdu-sarkozy-2018-1}. The required system of equations can be obtained by $ cf(u) = bf(v) = af(w) = abc $ which yields $ acf(u) = abf(v) $ and $ bcf(u) = abf(w) $. In our proof, however, we will not use these two results.
	
	Before we prove the theorem let us state a result from \cite{corvaja-zannier-2005} that we are going to apply. We denote by $ \overline{\QQ} $ the set of algebraic numbers in $ \CC $, by $ M_{\QQ} $ the set of places of $ \QQ $, normalized such that the product formula holds, and by $ h $ the logarithmic Weil height.
	
	\begin{myprop}[Proposition 4 in \cite{corvaja-zannier-2005}]
		\label{p1-prop:corzann-gcd}
		Let $ r,s \in \overline{\QQ}[X] $ be two non-zero polynomials, not both vanishing at $ 0 $. Then for every $ \varepsilon > 0 $, all but finitely many solutions $ (u,v) \in (\Oc_S^*)^2 $ to the inequality
		\begin{equation}
			\label{p1-eq:propineq}
			\sum_{\nu \in M_{\QQ}} \min \set{0, \log \max \set{\abs{r(u)}_{\nu}, \abs{s(v)}_{\nu}}} < - \varepsilon \max \set{h(u), h(v)}
		\end{equation}
		are contained in finitely many translates of one-dimensional subgroups of $ \GG_m^2 $ (which can be effectively determined).
	\end{myprop}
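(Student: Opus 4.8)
The plan is to recognise the left-hand side of \eqref{p1-eq:propineq} as the negative of a height-theoretic greatest common divisor and to deduce the statement from the Subspace Theorem, in the style of the gcd estimates of Bugeaud, Corvaja and Zannier. Indeed, at each place $ \nu $ the term $ \min\set{0,\log\max\set{\abs{r(u)}_\nu,\abs{s(v)}_\nu}} $ equals $ -\min\set{-\log\abs{r(u)}_\nu,-\log\abs{s(v)}_\nu} $ when both values are $ \nu $-adically smaller than $ 1 $ and vanishes otherwise, so the whole sum is $ -\log\gcd(r(u),s(v)) $ in the natural sense; the hypothesis \eqref{p1-eq:propineq} thus asserts that this gcd is large, $ \log\gcd(r(u),s(v))>\varepsilon\max\set{h(u),h(v)} $. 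First I would pass to a number field $ K $ containing all roots of $ r $ and $ s $, replace $ S $ by the finite set of places of $ K $ above the primes in $ S $ and above $ \infty $ (enlarging it further to absorb the finitely many places of bad reduction for the coefficients and for the root separations), and note that $ u,v $ remain $ S $-units in $ K $. Fixing an integer $ N\ge 1 $, I would then consider the monomial map $ \varphi(u,v)=(u^iv^j)_{0\le i,j\le N} $ into $ \PP^{D-1} $, $ D=(N+1)^2 $, the aim being to prove that the points $ \varphi(u,v) $ coming from solutions of large height lie in finitely many proper subspaces.

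The core of the argument is the choice at each place of a basis of forms adapted to the local gcd contribution. Since $ r $ and $ s $ are fixed, at each $ \nu $ the value $ u $ is $ \nu $-adically close to at most one root of $ r $ and $ v $ to at most one root of $ s $. At a place $ \nu $ where $ u $ is close to a root $ \alpha $ of $ r $ and simultaneously $ v $ is close to a root $ \beta $ of $ s $ --- precisely the places that can make the gcd large --- I would replace the monomial basis by $ \set{(X-\alpha)^i(Y-\beta)^j : 0\le i,j\le N} $, which is an upper-triangular transform of the monomials and hence again a basis, and whose $ (i,j) $-th member has $ \nu $-adic value $ \approx\abs{u-\alpha}_\nu^{\,i}\abs{v-\beta}_\nu^{\,j} $; at all other places I would keep the standard basis. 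Here the hypothesis that $ r $ and $ s $ do not both vanish at $ 0 $ enters decisively: the gcd could only be large \emph{for free} if both $ u $ and $ v $ ran $ \nu $-adically to $ 0 $ with $ 0 $ a common root of $ r $ and $ s $, which is exactly what is forbidden; whenever at a contributing place one variable tends to $ 0 $, the other must still approach a \emph{nonzero} root (as the matching polynomial is nonzero at $ 0 $), and the gain from that nonzero root already dominates the local gcd contribution, while closeness to $ \infty $ makes the relevant value blow up and contributes nothing. As there are finitely many roots and finitely many places in $ S $, only finitely many systems of adapted bases occur, and I would split the solutions into finitely many classes accordingly.

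Next I would carry out the double-counting. Normalising each $ \varphi(u,v) $ to coprime $ S $-integral coordinates, the product formula gives that the sum over $ \nu\in S $ and over the standard basis of $ \log(\abs{\cdot}_\nu/\norm{\varphi}_\nu) $ equals $ -D\,h(\varphi) $, because every coordinate $ u^iv^j $ is an $ S $-unit. Passing to the adapted basis changes this total, at each contributing place, by $ -\bigl(\tfrac{c_1}{m_\alpha}(-\log\abs{r(u)}_\nu)+\tfrac{c_2}{m_\beta}(-\log\abs{s(v)}_\nu)\bigr) $ up to an error independent of $ h(u),h(v) $, where $ m_\alpha,m_\beta $ are the multiplicities of the relevant roots and $ c_1,c_2\asymp N^3 $ (any term attached to the root $ 0 $ simply cancels). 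Since each coefficient is at least some $ c\asymp N^3 $ and both smallnesses are nonnegative, the total change is at most $ -c\,\log\gcd(r(u),s(v)) $ up to a bounded error, which by \eqref{p1-eq:propineq} is $ <-c\,\varepsilon\max\set{h(u),h(v)} $. Because $ h(\varphi)\asymp N\max\set{h(u),h(v)} $, the resulting double product is smaller than $ H(\varphi)^{-(D+\delta)} $, with multiplicative height $ H=\exp h $ and a fixed $ \delta\asymp N^2\varepsilon>0 $, once the heights are large; the Subspace Theorem then confines all but finitely many $ \varphi(u,v) $ to finitely many proper subspaces of $ \PP^{D-1} $. I expect the main obstacle to be exactly this local analysis: establishing $ -\log\abs{u-\alpha}_\nu=(-\log\abs{r(u)}_\nu)/m_\alpha+O(1) $ uniformly over all places including the archimedean ones, handling the root multiplicities and the vanishing-at-$ 0 $ case, checking the linear independence of the adapted forms, and keeping the error negligible against the $ N^3 $-sized main term.

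Finally, each of the finitely many proper subspaces pulls back under $ \varphi $ to a nontrivial relation $ \sum c_{ij}u^iv^j=0 $, that is, to an algebraic curve in $ \GG_m^2 $. By Laurent's theorem on $ S $-unit points of subvarieties of tori, the pairs $ (u,v)\in(\Oc_S^*)^2 $ lying on such a curve are contained in finitely many cosets of subtori contained in the curve; the one-dimensional cosets are the asserted translates of one-dimensional subgroups, while the zero-dimensional cosets contribute only finitely many points, which together with the finitely many solutions of bounded height and the finitely many exceptions of the Subspace Theorem make up the finite exceptional set. The translates themselves can be singled out directly from $ r $, $ s $ and $ S $ --- by determining which cosets of one-dimensional subtori can support infinitely many pairs of large gcd --- so that they are effectively determined, even though the Subspace Theorem leaves the cardinality of the exceptional set ineffective.
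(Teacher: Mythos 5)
Your attempt should first be measured against the right target: the paper itself gives no proof of this statement --- it is imported verbatim as Proposition 4 of \cite{corvaja-zannier-2005} --- so the comparison is with Corvaja and Zannier's original subspace-theorem argument. Your opening reformulation of the left-hand side of \eqref{p1-eq:propineq} as $-\log\gcd(r(u),s(v))$ in the generalized sense is correct, your identification of where the hypothesis ``not both vanishing at $0$'' must enter is exactly right (with $r=s=X$ the pairs $u,v$ divisible by high powers of a fixed $p\in S$ satisfy \eqref{p1-eq:propineq} and sweep out a two-dimensional family), and the endgame via Laurent's theorem applied to the curves $\sum c_{ij}u^iv^j=0$, with the cosets determined effectively a posteriori, matches the known structure. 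But the core double-counting step has a genuine gap, in fact a sign error. For $(u,v)\in(\Oc_S^*)^2$ the places that contribute to the sum in \eqref{p1-eq:propineq} are, up to a bounded amount, the places \emph{outside} $S$: with $r=s=X-1$, say, the sum equals $-\log\gcd(u-1,v-1)$ and is supported at the primes dividing that gcd, none of which lie in $S$ and which vary with the solution. The Subspace Theorem only admits linear forms at a \emph{fixed} finite set of places, so your ``adapted bases at contributing places'' cannot be set up: no finite enlargement of $S$ captures them, contradicting your claim that only finitely many systems of adapted bases occur. If instead you keep the forms at the places of $S$ only, the product formula turns your claimed gain into a loss: each adapted value $(u-\alpha)^{i}(v-\beta)^{j}$ is, up to factors bounded independently of the solution, an $S$-integer whose prime-to-$S$ numerator is divisible by a power of the gcd, whence $\prod_{\nu\in S}\abs{L(\varphi)}_{\nu}=\prod_{\nu\notin S}\abs{L(\varphi)}_{\nu}^{-1}\geq \gcd^{c}\geq 1$, while the monomial forms you replace have $S$-product exactly $1$. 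So the swap makes the double product \emph{larger} than the baseline $H(\varphi)^{-D}$, never smaller, and no subspace inequality results.

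The missing idea --- present in Corvaja--Zannier's proof and in all later treatments of such gcd bounds --- is that the gcd must be made to lower the \emph{height of the auxiliary point} rather than the local values of the forms at $S$. Concretely, one uses truncated geometric-series (Pad\'e-type) identities for reciprocals such as $1/r(u)$ to build, for each solution, an integer vector in which the gcd has been divided out of the coordinates; the archimedean (and $S$-adic) linear forms coming from the series tails stay small, while $H(\mathbf{x})$ drops by a power of the gcd, and it is this height drop that produces the $\varepsilon$-gain against the threshold of the Subspace Theorem. Your framework of monomial embeddings, local gain bookkeeping with coefficients of size $N^{3}$ against $h(\varphi)\asymp N\max\set{h(u),h(v)}$, and the reduction to finitely many subspaces is the right shell, but without this pivot --- forms whose values acquire large prime-to-$S$ \emph{denominators}, equivalently a point of reduced height, instead of values divisible by the gcd --- the central inequality cannot be established, and the proof does not go through as written.
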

	
	Since this proposition is based on the Schmidt subspace theorem our result is ineffective in the sense that the proof does not show how to bound the number of triples.
	
	The idea of the proof of our main theorem is now the following: We assume that there are infinitely many triples satisfying the conditions of the statement.
	Now we apply the proposition to $ f(v), f(w) $ and distinguish between two cases.
	The first case is that infinitely many triples are not solutions of the inequality \eqref{p1-eq:propineq} and is handled by rewriting the expressions in the inequality.
	The second one assumes that infinitely many triples lie in a fixed one-dimensional subgroup of $ \GG_m^2 $.
	Here we apply the proposition once again for $ f(u), f(w) $ and consider the analogous subcases.
	If infinitely many triples are not solutions of \eqref{p1-eq:propineq} we get the contradiction by an analysis of the growth of some variables.
	Provided we are again in a fixed one-dimensional subgroup of $ \GG_m^2 $ the argument considers the multiplicity of roots of a certain polynomial.
	
	We conclude this section by explaining some notations that will be used later.
	The expression $ x \ll y $ means that there exists a positive constant $ L $ such that $  x \leq Ly $ for any in the respective context admissible pair $ (x,y) $.
	Furthermore we use the Landau symbol $ F(x) = O(G(x)) $ to denote that there exists a constant $ M $ such that $ \abs{F(x)} \leq M \abs{G(x)} $ for $ x $ large enough.
	
	\section{Proof of the main theorem}
	
	\begin{proof}[Proof of Theorem \ref{p1-thm:mainthm}]
		Let $ S = \set{p_1, \ldots, p_h} $.
		Assume that there are infinitely many $ (a,b,c) $ with $ 0 < a < b < c $ such that $ ab = f(u), ac = f(v), bc = f(w) $ for $ u,v,w \in \Oc_S^* \cap \NN $. Then it follows that $ c \rightarrow \infty $.
		First note that for
		\begin{equation*}
			f(X) = f_n X^n + \ldots + f_1 X + f_0
		\end{equation*}
		we have that $ f_n > 0 $. Otherwise $ f(x) $ would have an upper bound on $ \NN $ and therefore $ c $ would be bounded.
		Since $ c \rightarrow \infty $ we have also $ v \rightarrow \infty $ and $ w \rightarrow \infty $. With at most finitely many exceptions we can assume that each solution $ (a,b,c) $ corresponds to both an unique value of $ v $ and an unique value of $ w $.
		Furthermore we can assume that $ u < v < w $.
		
		By Proposition \ref{p1-prop:corzann-gcd} applied with $ r = s = f $ and $ \varepsilon_1 = \frac{1}{4} $ we get
		\begin{equation*}
			\sum_{\nu \in M_{\QQ}} \min \set{0, \log \max \set{\abs{f(v)}_{\nu}, \abs{f(w)}_{\nu}}} \geq - \frac{1}{4} \max \set{h(v), h(w)}
		\end{equation*}
		for all $ w $ large enough and except for $ (v,w) \in (\Oc_S^*)^2 $ in finitely many translates of one-dimensional subgroups of $ \GG_m^2 $.
		Now we have
		\begin{align*}
			\sum_{\nu \in M_{\QQ}} \min \set{0, \log \max \set{\abs{f(v)}_{\nu}, \abs{f(w)}_{\nu}}}
			&= \log \prod_{\nu \in M_{\QQ}} \min \set{1, \max \set{\abs{ac}_{\nu}, \abs{bc}_{\nu}}} \\
			&= \log \prod_{p \in \PP} \max \set{\abs{ac}_p, \abs{bc}_p} \\
			&= \log \left( (\gcd \setb{ac, bc})^{-1} \right) \\
			&= \log \left( (\gcd \setb{f(v), f(w)})^{-1} \right)
		\end{align*}
		and
		\begin{align*}
			- \frac{1}{4} \max \set{h(v), h(w)}
			&= \min \set{- \frac{1}{4} \log H(v), - \frac{1}{4} \log H(w)} \\
			&= \log \min \set{H(v)^{- \frac{1}{4}}, H(w)^{- \frac{1}{4}}} \\
			&= \log \min \set{v^{- \frac{1}{4}}, w^{- \frac{1}{4}}}
			= \log \left( w^{- \frac{1}{4}} \right).
		\end{align*}
		This yields
		\begin{equation*}
			c \leq \gcd \setb{ac, bc} = \gcd \setb{f(v), f(w)} \leq w^{\frac{1}{4}}
		\end{equation*}
		for all $ w $ large enough and except for $ (v,w) \in (\Oc_S^*)^2 $ in finitely many translates of one-dimensional subgroups of $ \GG_m^2 $.
		Since $ f(w) \leq c^2 \leq w^{\frac{1}{2}} $ ends up in a contradiction for large $ w $, infinitely many solutions must correspond to pairs $ (v,w) $ located in a fixed one-dimensional subgroup of $ \GG_m^2 $.
		
		From now on we have
		\begin{equation}
			\label{p1-eq:firstsubgroup}
			v^k w^{\ell} = g
		\end{equation}
		for fixed integers $ k,\ell $ with $ \gcd \setb{k, \ell} = 1 $ and $ k > 0 $ as well as a fixed $ g \in \QQ^* $. This implies $ \ell < 0 $.
		As $ v,w \in \Oc_S^* $ we can write them as $ v = \prod_{i=1}^{h} p_i^{v_i} $ and $ w = \prod_{i=1}^{h} p_i^{w_i} $.
		Thus $ g = \prod_{i=1}^{h} p_i^{g_i} $.
		For $ i = 1, \ldots, h $ this implies $ p_i^{k v_i + \ell w_i} = p_i^{g_i} $ and $ k v_i + \ell w_i = g_i $.
		Let us now consider a fixed pair $ (\overline{v}, \overline{w}) $. Using the analogous notation we have $ k \overline{v_i} + \ell \overline{w_i} = g_i $.
		Rewriting yields $ k (\overline{v_i} - v_i) = \ell (w_i - \overline{w_i}) $ and thanks to the fact that $ k $ and $ \ell $ are coprime and non-zero we get
		\begin{align*}
			w_i &= \overline{w_i} + k \mu_i \\
			v_i &= \overline{v_i} - \ell \mu_i.
		\end{align*}
		With $ \rho = \prod_{i=1}^{h} p_i^{\mu_i} $ we end up with $ w = \overline{w} \rho^k $ and $ v = \overline{v} \rho^{-\ell} $.
		These representations give us the following estimates:
		\begin{align*}
			c \leq f(v) &\ll v^n \ll \rho^{-\ell n} \\
			c \leq f(w) &\ll w^n \ll \rho^{k n} \\
			f(v) &\gg v^n \gg \rho^{-\ell n} \\
			f(w) &\gg w^n \gg \rho^{k n}.
		\end{align*}
		
		By applying Proposition \ref{p1-prop:corzann-gcd} again with an $ \varepsilon_2 $ to be fixed later we get
		\begin{equation*}
			\sum_{\nu \in M_{\QQ}} \min \set{0, \log \max \set{\abs{f(u)}_{\nu}, \abs{f(w)}_{\nu}}} \geq - \varepsilon_2 \max \set{h(u), h(w)}
		\end{equation*}
		and in the same way as above
		\begin{equation*}
			b \leq \gcd \set{ab, bc} = \gcd \set{f(u), f(w)} \leq w^{\varepsilon_2}
		\end{equation*}
		for all $ w $ large enough and except for $ (u,w) \in (\Oc_S^*)^2 $ in finitely many translates of one-dimensional subgroups of $ \GG_m^2 $.
		
		In the first case we distinguish between three subcases. First let us assume that $ k + \ell > 0 $.
		Here we choose $ \varepsilon_2 = \frac{(k+\ell)n}{2k} $ and get
		\begin{equation*}
			\rho^{(k+\ell)n} = \frac{\rho^{kn}}{\rho^{-\ell n}} \ll \frac{f(w)}{c} = b \leq w^{\varepsilon_2} \ll \rho^{k\varepsilon_2} = \rho^{(k+\ell)n/2}
		\end{equation*}
		which is a contradiction for large $ w $ as a large $ w $ would imply also a large $ \rho $.
		Next we assume $ k + \ell < 0 $. We choose $ \varepsilon_2 = \frac{-(k+\ell)n}{2k} $, get
		\begin{equation*}
			\rho^{-(k+\ell)n} = \frac{\rho^{-\ell n}}{\rho^{kn}} \ll \frac{f(v)}{c} = a < b \leq w^{\varepsilon_2} \ll \rho^{k\varepsilon_2} = \rho^{-(k+\ell)n/2}
		\end{equation*}
		and this is again a contradiction for large $ w $.
		Last but not least let $ k + \ell = 0 $ and choose $ \varepsilon_2 = \frac{1}{2} $. Since $ \gcd \setb{k, \ell} = 1 $ and $ k > 0 $ it is $ k = -\ell = 1 $.
		Therefore we have $ \frac{w}{v} = \phi $ for a positive constant $ \phi \in \QQ $.
		As $ \phi = 1 $ leads to $ a = \frac{f(v)}{c} = \frac{f(w)}{c} = b $, we can assume $ \phi \neq 1 $.
		Rewriting $ w = \phi v $ we get
		\begin{equation*}
			\frac{b}{a} = \frac{bc}{ac} = \frac{f(\phi v)}{f(v)} \overset{v \rightarrow \infty}{\longrightarrow} \phi^n = \frac{p^n}{q^n}
		\end{equation*}
		for integers $ p,q $ satisfying $ \phi = \frac{p}{q} $.
		Let us first suppose that there are infinitely many $ v $ corresponding to solutions such that $ \abs{bq^n - ap^n} < 1 $.
		As the absolute value is taken from an integer it follows $ bq^n = ap^n $ so $ b = a\phi^n $ and furthermore $ f(\phi v) = f(w) = bc = \phi^n ac = \phi^n f(v) $.
		Since this holds for infinitely many $ v $ the polynomial identity $ f(\phi X) = \phi^n f(X) $ must hold in $ \QQ[X] $. With $ \phi > 0, \phi \neq 1 $ it is an immediate consequence that the polynomial $ f(X) $ is of the form $ f_n X^n $ which is a contradiction to our assumptions in the theorem.
		Thus all but finitely many $ v $ correspond to solutions with $ \abs{bq^n - ap^n} \geq 1 $. Here we get
		\begin{align*}
			\frac{1}{aq^n}
			&\leq \abs{\frac{b}{a} - \frac{p^n}{q^n}}
			= \abs{\frac{b}{a} - \phi^n}
			= \abs{\frac{f(\phi v)}{f(v)} - \phi^n} \\
			&= \abs{\frac{f_n \phi^n v^n + f_{n-1} \phi^{n-1} v^{n-1} + \cdots + f_1 \phi v + f_0}{f_n v^n + f_{n-1} v^{n-1} + \cdots + f_1 v + f_0} - \phi^n} \\
			&= \abs{\frac{\phi^n + \frac{f_{n-1}}{f_n} \frac{\phi^{n-1}}{v} + \cdots + \frac{f_1}{f_n} \frac{\phi}{v^{n-1}} + \frac{f_0}{f_n} \frac{1}{v^n}}{1 + \frac{f_{n-1}}{f_n} \frac{1}{v} + \cdots + \frac{f_1}{f_n} \frac{1}{v^{n-1}} + \frac{f_0}{f_n} \frac{1}{v^n}} - \phi^n} \\
			&= \abs{\frac{0 + \frac{f_{n-1}}{f_n} \frac{\phi^{n-1}-\phi^n}{v} + \cdots + \frac{f_1}{f_n} \frac{\phi-\phi^n}{v^{n-1}} + \frac{f_0}{f_n} \frac{1-\phi^n}{v^n}}{1 + \frac{f_{n-1}}{f_n} \frac{1}{v} + \cdots + \frac{f_1}{f_n} \frac{1}{v^{n-1}} + \frac{f_0}{f_n} \frac{1}{v^n}}} \\
			&= \frac{1}{v} \cdot \abs{\frac{\frac{f_{n-1}}{f_n} \left( \phi^{n-1}-\phi^n \right) + O\left( \frac{1}{v} \right)}{1 + O\left( \frac{1}{v} \right)}}
			\leq \frac{B}{v}
		\end{align*}
		for a constant $ B $.
		Therefore we have $ v \ll a $ and
		\begin{equation*}
			\rho \ll v \ll a < b \leq w^{\varepsilon_2} \ll \rho^{\frac{1}{2}}
		\end{equation*}
		is once again a contradiction for large $ w $.
		In summary all three subcases end up in a contradiction.
		
		So we are again in the exceptional case which means that infinitely many solutions correspond to pairs $ (u,w) $ in a fixed one-dimensional subgroup of $ \GG_m^2 $.
		From here on we have in addition to equation \eqref{p1-eq:firstsubgroup} the equation
		\begin{equation}
			u^m w^r = s
		\end{equation}
		for fixed integers $ m,r $ with $ \gcd \setb{m,r} = 1 $ and $ m > 0 $ as well as a fixed $ s \in \QQ^* $.
		In the same manner as above we have $ u = \prod_{i=1}^{h} p_i^{u_i} $ and $ s = \prod_{i=1}^{h} p_i^{s_i} $.
		Furthermore for a fixed pair $ (\overline{u}, \overline{w}) $ in an analogous way it follows that $ m u_i + r w_i = s_i $ and $ m \overline{u_i} + r \overline{w_i} = s_i $ as well as $ m (\overline{u_i} - u_i) = r (w_i - \overline{w_i}) $.
		In this connection $ \overline{w} $ from above can be chosen such that we can use it here again.
		Let us first assume that $ r \neq 0 $.
		Using that $ m $ and $ r $ are coprime we get
		\begin{align*}
			w_i &= \overline{w_i} + m \tau_i \\
			u_i &= \overline{u_i} - r \tau_i.
		\end{align*}
		Combining both representations of $ w_i $ yields $ k\mu_i = m\tau_i = \lcm \setb{k,m} \cdot \psi_i $ where $ \mu_i = \psi_i \widetilde{k} $ and $ \tau_i = \psi_i \widetilde{m} $ for $ k \widetilde{k} = \lcm \setb{k,m} = m \widetilde{m} $.
		With $ x = -r \widetilde{m} $, $ y = -\ell \widetilde{k} $ and $ z = k \widetilde{k} = m \widetilde{m} $ we have
		\begin{align*}
			w_i &= \overline{w_i} + z \psi_i \\
			v_i &= \overline{w_i} + y \psi_i \\
			u_i &= \overline{u_i} + x \psi_i.
		\end{align*}
		In the case $ r = 0 $ we get the same representation using the definitions $ x = 0, y = -\ell, z = k $ and $ \psi_i = \mu_i $.
		It holds that $ z>0, y>0 $ and $ x \geq 0 $.
		Overall we have
		\begin{align*}
			u &= \prod_{i=1}^{h} p_i^{u_i} = \prod_{i=1}^{h} p_i^{\overline{u_i} + x \psi_i} = \overline{u} \left( \prod_{i=1}^{h} p_i^{\psi_i} \right)^x = \overline{u} \eta^x \\
			v &= \prod_{i=1}^{h} p_i^{v_i} = \prod_{i=1}^{h} p_i^{\overline{v_i} + y \psi_i} = \overline{v} \left( \prod_{i=1}^{h} p_i^{\psi_i} \right)^y = \overline{v} \eta^y \\
			w &= \prod_{i=1}^{h} p_i^{w_i} = \prod_{i=1}^{h} p_i^{\overline{w_i} + z \psi_i} = \overline{w} \left( \prod_{i=1}^{h} p_i^{\psi_i} \right)^z = \overline{w} \eta^z
		\end{align*}
		for $ \eta = \prod_{i=1}^{h} p_i^{\psi_i} $.
		
		The next step is to \glqq optimize\grqq\ the exponents for later. Let $ \psi_i = 3t_i + \beta_i $ for $ \beta_i \in \set{0,1,2} $.
		Then for infinitely many solutions the $ \beta_i $ are fixed. Therefore
		\begin{align*}
			u &= \overline{u} \left( \prod_{i=1}^{h} p_i^{\psi_i} \right)^x = \overline{u} \left( \prod_{i=1}^{h} p_i^{\beta_i} \right)^x \left( \prod_{i=1}^{h} p_i^{t_i} \right)^{3x} = \eta_1 X^{d_1} \\
			v &= \overline{v} \left( \prod_{i=1}^{h} p_i^{\psi_i} \right)^y = \overline{v} \left( \prod_{i=1}^{h} p_i^{\beta_i} \right)^y \left( \prod_{i=1}^{h} p_i^{t_i} \right)^{3y} = \eta_2 X^{d_2} \\
			w &= \overline{w} \left( \prod_{i=1}^{h} p_i^{\psi_i} \right)^z = \overline{w} \left( \prod_{i=1}^{h} p_i^{\beta_i} \right)^z \left( \prod_{i=1}^{h} p_i^{t_i} \right)^{3z} = \eta_3 X^{d_3}
		\end{align*}
		with $ \eta_1 = \overline{u} \left( \prod_{i=1}^{h} p_i^{\beta_i} \right)^x, \eta_2 = \overline{v} \left( \prod_{i=1}^{h} p_i^{\beta_i} \right)^y, \eta_3 = \overline{w} \left( \prod_{i=1}^{h} p_i^{\beta_i} \right)^z $ as well as $ d_1 = 3x, d_2 = 3y, d_3 = 3z $ and $ X = \prod_{i=1}^{h} p_i^{t_i} $.
		Note that $ \eta_1 > 0 $, $ \eta_2 > 0 $ and $ \eta_3 > 0 $.
		Furthermore $ d_1 \leq d_2 \leq d_3 $ holds as for $ v $ large enough we have $ u < v < w $.
		By multiplying the original equations we get $ (abc)^2 = f(u) f(v) f(w) $ and for $ Y = abc $ there are infinitely many quasi-integral points on
		\begin{equation}
			\label{p1-eq:poly-quasiint}
			Y^2 = f(\eta_1 X^{d_1}) f(\eta_2 X^{d_2}) f(\eta_3 X^{d_3}).
		\end{equation}
		Therefore by Theorem 1 in \cite{leveque-1964} the polynomial on the right hand side of equation \eqref{p1-eq:poly-quasiint} has at most two zeros of odd multiplicity.
		Let us now factor the polynomial $ f $ over its splitting field into
		\begin{equation*}
			f(X) = f_n (X - \alpha_1) \cdots (X - \alpha_t) \times \Box
		\end{equation*}
		where the $ \alpha_i $ are the pairwise distinct roots of odd multiplicity and $ t \geq 1 $ by assumption.
		In this notation we use the symbol $ \Box $ as a wild card for a polynomial that is the square of another polynomial.
		To complete the proof we will show that the polynomial
		\begin{equation*}
			f(\eta_1 X^{d_1}) f(\eta_2 X^{d_2}) f(\eta_3 X^{d_3}) =
			C \prod_{j=1}^{3} \prod_{i=1}^{t} \left(X^{d_j} - \frac{\alpha_i}{\eta_j}\right) \times \Box
		\end{equation*}
		has at least three roots of odd multiplicity.
		To do so we are going to show that the polynomial with the $ \Box $ omitted has three simple roots.
		
		We consider four cases.
		In the first one we assume that $ d_1 = d_2 = d_3 $.
		W.l.o.g. we have $ \eta_1 \leq \eta_2 \leq \eta_3 $ and $ \abs{\alpha_1} \leq \abs{\alpha_2} \leq \cdots \leq \abs{\alpha_t} $.
		If $ \eta_2 = \eta_3 $ we get the contradiction $ v = w $. So $ \eta_2 < \eta_3 $ and for $ 1 \leq i \leq t $
		\begin{equation*}
			\abs{\frac{\alpha_1}{\eta_3}} < \abs{\frac{\alpha_1}{\eta_2}} \leq \abs{\frac{\alpha_i}{\eta_2}} \quad \text{and} \quad \abs{\frac{\alpha_1}{\eta_3}} < \abs{\frac{\alpha_1}{\eta_1}} \leq \abs{\frac{\alpha_i}{\eta_1}}
		\end{equation*}
		which implies that there are $ d_3 \geq 3 $ simple roots coming from the factor $ X^{d_3} - \frac{\alpha_1}{\eta_3} $.
		In the second case we assume that $ d_1 \leq d_2 < d_3 $.
		For a complex number $ \gamma \in \CC $ let $ \arg \gamma \in (0,2\pi] $ be the argument of $ \gamma $ (the complex angle).
		W.l.o.g. it holds that $ \arg \alpha_1 \leq \arg \alpha_2 \leq \cdots \leq \arg \alpha_t $ (in general another ordering as in the first case).
		Since $ \eta_j > 0 $ for $ j = 1,2,3 $ we have $ \arg \frac{\alpha_i}{\eta_j} = \arg \alpha_i $ for all $ i $ and all $ j $.
		The minimal argument occurring under the roots of $ (X^{d_1} - \frac{\alpha_1}{\eta_1}) \cdots (X^{d_1} - \frac{\alpha_t}{\eta_1}) (X^{d_2} - \frac{\alpha_1}{\eta_2}) \cdots (X^{d_2} - \frac{\alpha_t}{\eta_2}) $ is $ \frac{1}{d_2} \arg \alpha_1 $.
		The minimal argument occurring under the roots of $ (X^{d_3} - \frac{\alpha_1}{\eta_3}) \cdots (X^{d_3} - \frac{\alpha_t}{\eta_3}) $ is $ \frac{1}{d_3} \arg \alpha_1 $.
		Since $ \frac{1}{d_3} \arg \alpha_1 < \frac{1}{d_2} \arg \alpha_1 $ there is a simple root coming from the factor $ X^{d_3} - \frac{\alpha_1}{\eta_3} $.
		
		Consider now the following: Let $ \zeta $ be a common root of $ X^{d_{j_1}} - \frac{\alpha_{i_1}}{\eta_{j_1}} $ and $ X^{d_{j_2}} - \frac{\alpha_{i_2}}{\eta_{j_2}} $.
		Then $ \zeta e^{2\pi i/3} $ and $ \zeta e^{4\pi i/3} $ are also common roots of them since $ 3 \mid d_{j_1} $ and $ 3 \mid d_{j_2} $.
		Thus the number of common roots of $ X^{d_{j_1}} - \frac{\alpha_{i_1}}{\eta_{j_1}} $ and $ X^{d_{j_2}} - \frac{\alpha_{i_2}}{\eta_{j_2}} $ is a multiple of $ 3 $.
		The number of all zeros of each of them is also a multiple of $ 3 $. 
		Therefore the number of roots of $ X^{d_{j_1}} - \frac{\alpha_{i_1}}{\eta_{j_1}} $ that occur in no other $ X^{d_{j_2}} - \frac{\alpha_{i_2}}{\eta_{j_2}} $ is a multiple of $ 3 $.
		
		Using this there are at least three simple roots coming from the factor $ X^{d_3} - \frac{\alpha_1}{\eta_3} $.
		The third case assumes that $ 0 = d_1 < d_2 = d_3 $.
		W.l.o.g. we have again $ \eta_2 \leq \eta_3 $ and $ \abs{\alpha_1} \leq \abs{\alpha_2} \leq \cdots \leq \abs{\alpha_t} $.
		If $ \eta_2 = \eta_3 $ we get once again the contradiction $ v = w $. So $ \eta_2 < \eta_3 $ and for $ 1 \leq i \leq t $
		\begin{equation*}
			\abs{\frac{\alpha_1}{\eta_3}} < \abs{\frac{\alpha_1}{\eta_2}} \leq \abs{\frac{\alpha_i}{\eta_2}}
		\end{equation*}
		which implies that there are $ d_3 \geq 3 $ simple roots coming from the factor $ X^{d_3} - \frac{\alpha_1}{\eta_3} $.
		Last but not least we have to handle the case $ 0 < d_1 < d_2 = d_3 $.
		W.l.o.g. it holds that $ \arg \alpha_1 = \arg \alpha_2 = \cdots = \arg \alpha_{\lambda} < \arg \alpha_{\lambda+1} \leq \cdots \leq \arg \alpha_t $ and $ \abs{\alpha_1} < \abs{\alpha_2} < \cdots < \abs{\alpha_{\lambda}} $ and $ \eta_2 \leq \eta_3 $.
		Once more we have $ \arg \frac{\alpha_i}{\eta_j} = \arg \alpha_i $ for all $ i $ and all $ j $ as well as $ \eta_2 < \eta_3 $.
		The minimal argument occurring under the roots of $ (X^{d_1} - \frac{\alpha_1}{\eta_1}) \cdots (X^{d_1} - \frac{\alpha_t}{\eta_1}) $ is $ \frac{1}{d_1} \arg \alpha_1 $.
		The minimal argument occurring under the roots of $ (X^{d_2} - \frac{\alpha_1}{\eta_2}) \cdots (X^{d_2} - \frac{\alpha_{\lambda}}{\eta_2}) $ is $ \frac{1}{d_2} \arg \alpha_1 $.
		The minimal argument occurring under the roots of $ (X^{d_2} - \frac{\alpha_{\lambda+1}}{\eta_2}) \cdots (X^{d_2} - \frac{\alpha_t}{\eta_2}) $ is $ \frac{1}{d_2} \arg \alpha_{\lambda+1} $.
		The minimal argument occurring under the roots of $ (X^{d_3} - \frac{\alpha_1}{\eta_3}) \cdots (X^{d_3} - \frac{\alpha_{\lambda}}{\eta_3}) $ is $ \frac{1}{d_2} \arg \alpha_1 $.
		The minimal argument occurring under the roots of $ (X^{d_3} - \frac{\alpha_{\lambda+1}}{\eta_3}) \cdots (X^{d_3} - \frac{\alpha_t}{\eta_3}) $ is $ \frac{1}{d_2} \arg \alpha_{\lambda+1} $.
		Since $ \frac{1}{d_1} \arg \alpha_1 > \frac{1}{d_2} \arg \alpha_1 $ and $ \frac{1}{d_2} \arg \alpha_{\lambda+1} > \frac{1}{d_2} \arg \alpha_1 $ there are exactly $ 2\lambda $ zeros with argument $ \frac{1}{d_2} \arg \alpha_1 $.
		They have the absolut values
		\begin{equation*}
			\abs{\frac{\alpha_1}{\eta_3}}^{1/d_2}, \ldots, \abs{\frac{\alpha_{\lambda}}{\eta_3}}^{1/d_2}, \abs{\frac{\alpha_1}{\eta_2}}^{1/d_2}, \ldots, \abs{\frac{\alpha_{\lambda}}{\eta_2}}^{1/d_2}.
		\end{equation*}
		Therefore there is only one zero with argument $ \frac{1}{d_2} \arg \alpha_1 $ and absolute value $ \abs{\frac{\alpha_1}{\eta_3}}^{1/d_2} $.
		Using the consideration above there are at least three simple roots coming from the factor $ X^{d_3} - \frac{\alpha_1}{\eta_3} $.
		Now we have all cases finished and the statement is proven.
	\end{proof}

\end{document}